\begin{document}

\title*{Recent progress on pointwise normality of self-similar measures}
% Use \titlerunning{Short Title} for an abbreviated version of
% your contribution title if the original one is too long
\author{Amir Algom}
% Use \authorrunning{Short Title} for an abbreviated version of
% your contribution title if the original one is too long
\institute{Amir Algom \at Department of Mathematics, University of Haifa at Oranim, Tivon 36006, Israel \email{amir.algom@math.haifa.ac.il}
}
%
% Use the package "url.sty" to avoid
% problems with special characters
% used in your e-mail or web address
%
\maketitle

\abstract{This article is an exposition of recent results and methods on the prevalence of normal numbers in the support of self-similar measures on the line. We also provide an essentially self-contained proof of a recent Theorem that the Rajchman property (decay of the Fourier transform) implies that typical elements in the support of the measure are normal to all bases; as no decay rate is required, this improves  the classical criterion of Davenport,  Erdős, and LeVeque (1964). Open problems regarding effective equidistribution,  non-integer bases, and higher order correlations, are discussed.}

\numberwithin{theorem}{section}
\section{Introduction}
\label{Sec intro}
Let $b$ be an integer greater or equal to $2$. Let $T_b$ be the times $b$-map,
$$T_b(x)=b\cdot x \mod 1, x\in \mathbb{R}.$$
A number $x\in \mathbb{R}$ is called $b$-normal, or normal to base $b$, if its orbit $\lbrace T_b ^n (x) \rbrace_{n\in \mathbb{N}}$ equidistributes for $\lambda$, the Lebesgue measure on $[0,1]$. By a standard abuse of notation, $\lambda$ will also denote the Lebesgue measure on $\mathbb{T}:=\mathbb{R}/Z \simeq [0,1)$. That is, $x$ is normal to base $b$ if in the weak$^*$ sense
$$\lim_{N\rightarrow \infty} \frac{1}{N} \sum_{n=1} ^N \delta_{T_p ^n (x)} = \lambda.$$
In 1909 Borel proved that Lebesgue almost every $x$ is \textit{absolutely normal}, i.e., normal in all bases.  

 A highly active strand of research in metric number theory concerns finding optimal conditions on fractal measures and subsets of $\mathbb{R}$ under which they inherit the number-theoretic properties of the ambient space. In this spirit,  it is  believed that absolute normality should remain true for typical elements of well structured sets with respect to appropriate measures, absent obvious obstructions. The purpose of this paper is to discuss some of the recent progress made in this direction, to provide an exposition to  newly developed methods and tools, and to discuss some remaining open problems.

We will work with self-similar sets and measures on $\mathbb{R}$, which are defined as follows. Let $\Phi = \lbrace f_1,...f_n\rbrace\subseteq \text{Sim}(\mathbb{R})$ be a finite collection of invertible and contracting similarity maps, that leave some compact interval $J\subseteq \mathbb{R}$ invariant. That is, for all $f_i \in \Phi$,
$$f_i(x)=s_i\cdot x+t_i, \, s_i\in (-1,1)\setminus \lbrace 0 \rbrace,\, t_i\in \mathbb{R},\, f_i(J)\subseteq J.$$
We will refer to $\Phi$ as a \textit{self similar IFS} (Iterated Function System). 
It is well known that there exists an unique compact set $\emptyset \neq K=K_\Phi \subseteq J$ such that
$$K = \bigcup_{i=1} ^n f_i(K).$$
The set $K$ is called a \textit{self-similar set}, and the \textit{attractor} of the IFS $\lbrace f_1,...,f_n \rbrace$. We always assume that the fixed points of $f_i$ and $f_j$ differ for some $i,j$; this is known to ensure that $K$ is infinite.

Next, let $\mathbf{p}=\left( p_1,...,p_n \right)$ be  a strictly positive probability vector:
$$\sum_{i=1} ^n p_i =1,\, \text{ and } p_i>0 \text{ for all } i.$$
It is well known that there exists a unique probability measure $\nu=\nu_\mathbf{p}$ such that
$$\nu = \sum_{i=1} ^n p_i \cdot f_i\nu$$
where $f_i \nu$ is the push-forward of $\nu$ by $f_i$.  The measure $\nu$ is called a \textit{self-similar measure}, and is supported on $K$. The assumptions that $K$ is infinite and that each $p_i>0$ are known to ensure that $\nu$ is non-atomic. In particular, all self-similar measures in this note are non-atomic.

Let us now return to  the problem of absolute normality in the fractal setting. Given a measure $\mu$ on $\mathbb{R}$, we say it is \textit{pointwise absolutely normal} if $\mu$ almost every $x$ is absolutely normal. Now, it is not hard to see that for every compact interval $J$, the Lebesgue measure restricted to $J$ is a self-similar measure, whence it is pointwise absolutely normal by Borel's Theorem. On the other hand, it is also not hard to see that there exist self-similar measures that are not pointwise absolutely normal. For example, letting $\Phi = \lbrace f_1(x)=\frac{x}{3}, \, f_2(x)=\frac{x+2}{3} \rbrace$ with $\mathbf{p}=(\frac{1}{2},\frac{1}{2})$, we see that $\nu_\mathbf{p}$ is the Cantor-Lebesgue measure on the middle-$\frac{1}{3}$ Cantor set $K_\Phi$. Since no element in $K_\Phi$ can be normal to base $3$, $\nu_\mathbf{p}$ is not pointwise absolutely normal. The following Theorem shows that such digit restrictions are essentially the only possible obstruction to pointwise absolutely normality for self-similar measures: 

\begin{theorem} \label{Theorem main}
Let $\nu$ be a non-atomic self-similar measure on $\mathbb{R}$ with respect to an IFS $\Phi$. If $\nu$ is not absolutely pointwise normal then there exists an invertible affine map $g:\mathbb{R}\rightarrow \mathbb{R}$ such that the   $g$-conjugated  IFS $\Psi = \lbrace g^{-1} \circ f_i \circ g\rbrace_{f_i \in \Phi}$ has the following form.\newline
There exists some integer $b>1$ such that for every $h_i(x)=s_i\cdot x+t_i \in \Psi$:
\begin{enumerate}
\item We have $\frac{\log |s_i|}{\log b} \in \mathbb{Q};$ and,

\item There exist $k_i \in \mathbb{Z}$ and $q_i \in \mathbb{Q}_+$ such that $t_i = \frac{k_i}{b^{q_i}}$.
\end{enumerate}
\end{theorem}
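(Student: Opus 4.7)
The plan is to prove the contrapositive of Theorem~\ref{Theorem main} via Fourier analysis, reducing it to a classification of self-similar measures whose Fourier transform fails to vanish at infinity.

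First I would localise the failure to a single integer base. Assume $\nu$ is not pointwise absolutely normal. Since a point is absolutely normal iff it is $b$-normal for every integer $b\ge 2$, countable subadditivity applied to
\[
\{x:x\text{ is not absolutely normal}\}=\bigcup_{b\ge 2}\{x:x\text{ is not }b\text{-normal}\}
\]
produces a fixed integer $b\ge 2$ with $\nu\bigl(\{x:x\text{ is not }b\text{-normal}\}\bigr)>0$. Next, I would convert this dynamical failure into a Fourier statement. The Rajchman criterion announced in the abstract asserts that if $\hat{\nu}(\xi)\to 0$ as $|\xi|\to\infty$, then $\nu$-a.e.\ point is normal to every integer base. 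Its contrapositive, applied to $\nu$, tells us that $\nu$ is not Rajchman: there exist $c>0$ and $\xi_k\to\infty$ with $|\hat{\nu}(\xi_k)|\ge c$.

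Finally, I would invoke the classification of non-Rajchman self-similar measures on $\mathbb{R}$ (Br\'emont, with subsequent refinements by Li--Sahlsten and Varj\'u--Yu). This classification characterises the non-Rajchman property by the existence of an invertible affine conjugation $g$ bringing the IFS into precisely the arithmetic form described in Theorem~\ref{Theorem main}, for a common integer $b>1$. Applying the classification to $\nu$ directly produces the desired $g$, $b$, $\{k_i\}$ and $\{q_i\}$, completing the proof.

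The main obstacle is this final classification step. Its technical core is to exploit the self-similarity identity
\[
\hat{\nu}(\xi)=\sum_{i=1}^n p_i\, e^{2\pi i t_i\xi}\,\hat{\nu}(s_i\xi),
\]
iterated along the non-decaying sequence $\xi_k$, in order to force algebraic relations on the $s_i$ and $t_i$. Condition (1) on the contractions is typically extracted through Hochman-type entropy/dimension machinery together with Weyl equidistribution along multiplicative orbits, while condition (2) on the translations requires a delicate Liouville-type analysis on the affine group generated by the $h_i$: one must show that the only way the exponential phases $e^{2\pi i t_i\xi_k}$ can coherently align so as to sustain the magnitude lower bound $|\hat{\nu}(\xi_k)|\ge c$ is for the translations to lie on a common $b$-adic lattice. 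A further subtlety is that the purely Fourier-analytic non-Rajchman property is a priori compatible with non-integer Pisot bases; ruling these out, or absorbing them into an integer base via the conjugation $g$, is what makes the classification especially delicate. Steps one and two are essentially routine given the article's Rajchman criterion; the full weight of the theorem rests on this classification.
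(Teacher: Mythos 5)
There is a genuine gap in your final step, and it is exactly where the second main ingredient of the paper's proof lives. The classification of non-Rajchman self-similar measures (Br\'emont, Li--Sahlsten, Varj\'u--Yu) does \emph{not} produce an integer base $b$: it produces a Pisot number $\beta>1$ such that, after an affine conjugation $g$, all contraction ratios satisfy $\frac{\log|s_i|}{\log\beta}\in\mathbb{Q}$ and the translations lie in $\mathbb{Q}(\beta)$ with a specific algebraic structure. A priori $\beta$ may be a non-integer Pisot number that is not multiplicatively commensurable with any integer, and your proposed remedy --- ``absorbing them into an integer base via the conjugation $g$'' --- cannot work, because conjugating $f_i$ by an affine map leaves the linear part $s_i$ unchanged; no choice of $g$ can alter the commensurability class of the contractions. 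The paper closes this gap with a second, logically independent theorem (Theorem \ref{Main technical Theorem 2}, of Host/Hochman--Shmerkin type, proved in \cite{Algom2022Baker} and \cite{barany2023scaling}): if some $f_i'\not\sim b$ then $\nu$ is pointwise $b$-normal. If $\beta\not\sim b$ for \emph{every} integer $b>1$, this applies to every base, so $\nu$ would be pointwise absolutely normal, contradicting the hypothesis; hence $\beta\sim b$ for some integer $b$, and only then can one unwind Br\'emont's definitions to write $t_i=k_i/b^{q_i}$. Your proposal never invokes this theorem, so as written it proves only the Pisot-base version of the conclusion, not the integer-base statement of Theorem \ref{Theorem main}.

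Two smaller remarks. Your opening reduction (countable subadditivity to find a single bad base $b$) is harmless but unused --- that $b$ plays no role later and is unrelated to the $b$ in the conclusion; the paper goes directly from ``not pointwise absolutely normal'' to ``not Rajchman'' via Theorem \ref{main tech theorem 1}. And your sketch of how the classification is proved (entropy/dimension machinery) does not reflect the actual arguments of Br\'emont and Li--Sahlsten, though this is immaterial since you are using the classification as a black box.
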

Theorem \ref{Theorem main} is a  consequence of the recent work of the author, Rodriguez Hertz, and Wang \cite{algom2020decay}, combined with the work of the author, Baker, and Shmerkin \cite{Algom2022Baker}, or with the work of   B{\'a}r{\'a}ny,  K{\"a}enm{\"a}ki,  Py{\"o}r{\"a}l{\"a}, and  Wu \cite{barany2023scaling}. These works rely  on the recent breakthrough works of Hochman-Shmerkin \cite{hochmanshmerkin2015}, and of Hochman \cite{Hochman2020Host}, that provided new mechanisms to prove pointwise normality, that go beyond the classical approach of Davenport, Erdős, and LeVeque \cite{Davenport1964Erdos}. It also relies on recent progress on the Fourier decay problem for self-similar measures, due to Li-Sahlsten \cite{li2019trigonometric},  Br\'emont \cite{bremont2019rajchman}, and  Varj\'u-Yu \cite{varju2020fourier}.

This will be explained in greater detail in the next Section, where we outline and discuss  two main technical Theorems that imply Theorem \ref{Theorem main}. The proof of one them, Theorem \ref{main tech theorem 1}, will be given in later Sections  with full details. Finally, we note that using the recent work of Dayan,  Ganguly,  and Weiss \cite{dayan2020random}, it is possible, at least in some cases, to say something about the $q_i \in \mathbb{Q}_+$ and the structure of the affine conjugating map $g$ as in Theorem \ref{Theorem main} - see  the next Section for more details.

%To be more specific, Theorem \ref{Theorem main} follows from two recent more technical results: The first says that if a self-similar measure $\nu$ is Rajchman (i.e. its Fourier transform decays at infinity) then $\nu$ is absolutely pointwise normal (\cite[Theorem 1.3]{Algom2021decay} and Theorem \ref{main tech theorem 1} below); The second says that if $b>1$ is an integer and $\frac{\log |f_i'(0)|}{\log b} \notin \mathbb{Q}$ for some $f_i \in \Phi$, then $\nu$ is pointwise normal $b$-normal: $\nu$-a.e. $x$ is normal to base $b$ (\cite[Theorem 1.1]{Algom2022point} or \cite[Theorem 1.4]{Barany2023meng}, and Theorem \ref{Main technical Theorem 2}). Both of these statements have a long history, which we recall in the next Section, along with their formal statements.

\subsection{Main technical Theorems, and deriving  Theorem \ref{Theorem main} from them}
For a measure $\mu \in \mathcal{P}(\mathbb{R})$, where $\mathcal{P}(X)$ is the space of Borel probability measures on a  metric space $X$, we denote its Fourier transform by
$$\mathcal{F}_q (\mu):=\int e^{2\pi i x q} \,d\mu (x), \text{ where } q\in \mathbb{R}.$$
The following Theorem was originally proved in \cite[Theorem 1.3]{algom2020decay}. Its proof follows by combining bits and pieces from  more general arguments about $C^{1+\gamma}$ IFSs given in \cite{algom2020decay}. Thus, we take this opportunity and provide  a self-contained proof with full details in Sections \ref{sec:The martingale arguement} and \ref{Section proof}  below. \newpage
\begin{theorem} \label{main tech theorem 1}
Let $\nu \in \mathcal{P}(\mathbb{R})$ be a non-atomic self-similar measure. If
$$\lim_{|q|\rightarrow \infty}  \mathcal{F}_q(\nu) =0$$
then $\nu$ is pointwise absolutely normal.
\end{theorem}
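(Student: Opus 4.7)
By Weyl's criterion and a countable intersection over integer bases $b\ge 2$ and non-zero integer frequencies $q$, it suffices to show that for each such $(b,q)$,
$$\frac{1}{N}\sum_{n=0}^{N-1} e^{2\pi i q b^n x}\ \xrightarrow[N\to\infty]{}\ 0 \qquad\text{for } \nu\text{-almost every } x.$$
I would pass to the natural symbolic coding: let $\Sigma:=\Phi^{\mathbb{N}}$ carry the Bernoulli measure $P:=\mathbf{p}^{\otimes\mathbb{N}}$, let $\sigma$ be the shift, and let $\pi\colon\Sigma\to K$ be the canonical coding, so that $\pi_*P=\nu$ and $\pi(\omega)=s_{\omega|_k}\pi(\sigma^k\omega)+t_{\omega|_k}$, where $f_{\omega|_k}:=f_{\omega_1}\circ\cdots\circ f_{\omega_k}$ has slope $s_{\omega|_k}$ and translation $t_{\omega|_k}$. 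Writing $\mathcal{G}_k$ for the $\sigma$-algebra on $\Sigma$ generated by the first $k$ coordinates and $g_n(\omega):=e^{2\pi i q b^n \pi(\omega)}$, self-similarity combined with the independence of $\omega|_k$ and $\sigma^k\omega$ under $P$ yields the key identity
$$\mathbb{E}\bigl[g_n\mid\mathcal{G}_k\bigr](\omega) \ = \ e^{2\pi i q b^n\, t_{\omega|_k}}\cdot \mathcal{F}_{q b^n s_{\omega|_k}}(\nu).$$

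I would then choose the deterministic scale $k(n):=\lfloor cn\rfloor$ with a constant $0<c<\log b/|\log s_{\min}|$, where $s_{\min}:=\min_i|s_i|$. This forces $|q b^n s_{\omega|_{k(n)}}|\ge b^n s_{\min}^{k(n)}\to\infty$ \emph{uniformly in} $\omega$, and the Rajchman hypothesis then gives $\|\mathbb{E}[g_n\mid\mathcal{G}_{k(n)}]\|_\infty\to 0$; a Cesaro average sends $N^{-1}\sum_{n<N}\mathbb{E}[g_n\mid\mathcal{G}_{k(n)}]$ uniformly to $0$. This is the only place the Rajchman hypothesis enters, and crucially no decay rate is used.

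For the centered remainder $X_n:=g_n-\mathbb{E}[g_n\mid\mathcal{G}_{k(n)}]$, the plan is to run a martingale argument along the filtration $(\mathcal{G}_{k(n)})_n$. Since $k(n)$ is non-decreasing, the tower property gives $\mathbb{E}[X_n\mid\mathcal{G}_{k(m)}]=0$ for every $m\le n$; together with the uniform bound $|X_n|\le 2$ and the pairwise decorrelation $\mathbb{E}[g_n\overline{g_m}]=\mathcal{F}_{q(b^n-b^m)}(\nu)\to 0$ as $\min(n,m)\to\infty$, this should drive $N^{-1}\sum_{n<N}X_n\to 0$ $P$-almost surely via a martingale strong law of large numbers. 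Combining the two contributions and intersecting over the countable family $\{(b,q)\}$ then delivers $\nu$-a.s.\ absolute pointwise normality. The main obstacle, in my view, is precisely this martingale step: $X_n$ is $\mathcal{G}_\infty$-measurable but \emph{not} $\mathcal{G}_{k(n)}$-measurable (the tail $\sigma^{k(n)}\omega$ still enters through $\pi$), so the $X_n$ form only an approximate martingale-difference sequence, and the execution is technical. What makes the argument work without a rate on $\mathcal{F}_q(\nu)$---and hence strictly stronger than the classical Davenport--Erdős--LeVeque criterion---is that Rajchman is used only through the qualitative pointwise statement $\mathbb{E}[g_n\mid\mathcal{G}_{k(n)}]\to 0$, on which a Cesaro average is happy to act.
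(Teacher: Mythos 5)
Your reduction to Weyl sums, the symbolic coding, and the conditional-expectation identity $\mathbb{E}[g_n\mid\mathcal{G}_k](\omega)=e^{2\pi i qb^n t_{\omega|_k}}\mathcal{F}_{qb^n s_{\omega|_k}}(\nu)$ all match the paper's strategy, and you have correctly isolated where the difficulty lies. But the difficulty you flag is not merely technical: with your choice of scale it is fatal. To make the rescaled frequency $qb^n s_{\omega|_{k(n)}}$ tend to infinity uniformly you take $k(n)=\lfloor cn\rfloor$ with $c<\log b/|\log s_{\min}|$; but then for any fixed $j$ the quantity $b^n|s_{\omega|_{k(n)+j}}|\geq s_{\min}^{j}\cdot b^n|s_{\omega|_{k(n)}}|$ also tends to infinity, so the oscillation of $x\mapsto e^{2\pi i qb^n x}$ over the image of a generation-$(k(n)+j)$ cylinder is unbounded in $n$. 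Hence $g_n$ is \emph{not} uniformly approximated by $\mathbb{E}[g_n\mid\mathcal{G}_{k(n)+j}]$ for any fixed $j$, and the ergodic theorem for martingale differences (which requires $f_n$ to be $\mathcal{B}_{n+k}$-measurable for a fixed $k$, as in Theorem \ref{Theorem erogidc diff}) cannot be brought to bear even approximately. The fallback you suggest --- pairwise decorrelation $\mathbb{E}[g_n\overline{g_m}]=\mathcal{F}_{q(b^n-b^m)}(\nu)\to 0$ --- only yields $\mathbb{E}\bigl|N^{-1}\sum_{n<N}X_n\bigr|^2=o(1)$ in the absence of a rate, i.e.\ $L^2$ convergence; upgrading this to almost sure convergence along the full sequence is exactly the step where Davenport--Erd\H{o}s--LeVeque need their summability hypothesis, so no decay-rate-free argument can go this way. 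Note also that the two requirements --- conditional frequency blowing up, and cylinder images under $T_b^n$ shrinking --- differ by the bounded factor $s_{\min}^j$ and point in opposite directions, so no choice of scale, deterministic or $\omega$-dependent, can satisfy both.

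The paper resolves this tension by giving up on making the frequency blow up. It uses the stopping time $\beta_n(\omega)=\min\{m:|f'_{\omega|_m}(0)|<p^{-n}\}$ matched exactly to scale $p^{-n}$, so that $T_p^n$ applied to a generation-$\beta_{n+k}(\omega)$ cylinder has diameter $O(p^{-k})$ and Theorem \ref{Theorem Martingale} applies (this is Lemma \ref{Theorem 5.5}); the price is that the rescaled frequency satisfies $|qp^n f'_{\omega|_{\beta_n(\omega)}}(0)|\in[C_0|q|,|q|]$ and does \emph{not} tend to infinity with $n$. The Rajchman hypothesis therefore only gives $|\mathcal{F}_{q}(T_p^n\circ f_{\omega|_{\beta_n(\omega)}}\nu)|<\epsilon$ for all $|q|$ large, uniformly in $n$, and the missing ingredient is the paper's Lemma \ref{Theorem 5.2}: any weak$^*$ accumulation point $\mu$ of the empirical measures pushes forward to a $\hat{T}_p$-invariant measure, so $\mathcal{F}_q(\pi\mu)=\mathcal{F}_{qp^n}(\pi\mu)$, and smallness at large frequencies propagates down to every $q\neq 0$. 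This invariance trick is the idea your proposal is missing; once it is in place you can (and must) use the matched stopping time, and the martingale step becomes the one the paper actually carries out.
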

Theorem \ref{main tech theorem 1} is related to the classical approach of Davenport,  Erdős, and LeVeque \cite{Davenport1964Erdos} to finding normal numbers in the support of fractals measures $\mu \in \mathcal{P}(\mathbb{R})$. By integrating Weyl's criteria, they showed that sufficiently fast decay of the $L^2 (\mu)$ norms of certain trigonometric polynomials as in Weyl's criterion will lead to pointwise normality of $\mu$ in a given base $b$. It ensures, for example, that if for some $\alpha=\alpha(\mu)>0$ we have
$$ \mathcal{F}_q(\mu) = O \left( \frac{1}{|\log \log |q||^{1+\alpha} } \right), \text{ as } |q|\rightarrow \infty$$
then $\mu$ is pointwise absolutely normal. We note, however, the such bounds are usually hard to obtain (if true at all) in concrete situation, even for well structured measures $\mu$. Until fairly recently, this was in fact the only general  method of proving pointwise absolute normality, and was thus applied in a variety of setups.  For an  exposition to this method and its many applications, we refer to Bugeaud's book \cite[Chapters 1.2 and 4]{Bugeaud2012book}.

Theorem \ref{main tech theorem 1} is thus a dramatic improvement of this classical criterion for self-similar measures, as it merely requires the Rajchman property (vanishing of the Fourier transform at infinity). It is also related to a classical problem  posed by Kahane and Salem \cite{Kahane1964Salem}:
\begin{equation} \label{Eq question kahane}
\text{If } \mu\in \mathcal{P}(\mathbb{R}) \text{ is a Rajchman measure, is } \mu \text{ pointwise absolutely normal }?
\end{equation} 
A negative answer to question \eqref{Eq question kahane} was given by Lyons in 1986 \cite{Lyons1986Salem}. However, Theorem \ref{main tech theorem 1} shows that the answer to \eqref{Eq question kahane} is positive for the class of self-similar measures. For further refinements of Lyon's results, and a more general answer to question \eqref{Eq question kahane}, we refer to the recent works of Pramanik and Zhang \cite{Pra2024zhang1, Pra2024zhang2}.

Let us now proceed to discuss the second main ingredient in the proof of Theorem \ref{Theorem main}. It is concerned with the question of when is a self-similar measure $\mu\in \mathcal{P}(\mathbb{R})$ pointwise $b$-normal, for some integer $b>1$. This means that $\mu$-a.e. $x$ is $b$-normal, but $\mu$ is not required to be \textit{absolutely} pointwise normal. Here, it will be convenient to introduce some new notation: We write $a \not \sim b$ to indicate that $\frac{\log |a|}{\log |b|} \notin \mathbb{Q}$, and write $a\sim b$ otherwise. The following Theorem is proved in  \cite[Theorem 1.1]{Algom2022Baker}, and in \cite[Theorem 1.4]{barany2023scaling} using a somewhat different method, as we discuss below:
\begin{theorem} \label{Main technical Theorem 2}
Let $b>1$ be an integer, and let $\nu \in \mathcal{P}(\mathbb{R})$ be a non-atomic self-similar measure with respect to an IFS $\Phi$. If there exists $f_i \in \Phi$ such that $f_i ' \not \sim b$, then $\nu$ is pointwise $b$-normal. 
\end{theorem}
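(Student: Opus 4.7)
By Weyl's criterion, pointwise $b$-normality of $\nu$ is equivalent to the statement that, for every nonzero integer $q$,
$$\frac{1}{N}\sum_{n=0}^{N-1} e^{2\pi i q b^n x}\longrightarrow 0\quad\text{for $\nu$-a.e.\ }x.$$
My approach would combine Weyl's criterion with the self-similar structure via a scaling scenery analysis, using the single irrational ratio $\log|s_i|/\log b$ as the resonance-breaking hypothesis that drives everything. Unlike the setting of Theorem \ref{main tech theorem 1}, one does not have Rajchman decay to feed into Davenport--Erdős--LeVeque, and the conclusion is only $b$-normality rather than absolute normality, so the argument must be tailored to the specific base $b$ through the distinguished map $f_i$.

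First I would iterate the self-similar identity $\nu=\sum_j p_j f_j\nu$ to $\nu=\sum_{\omega\in\{1,\ldots,n\}^k}p_\omega f_\omega\nu$ for each $k$. Pulling a frequency $\xi$ back through $f_\omega$ multiplies it by the contraction $s_\omega=\prod_j|s_{\omega_j}|$. For a $\nu$-typical point $x$, the $b$-adic scaling sceneries of $\nu$ at $x$ (the zoom-ins at scales $b^{-N}$, suitably renormalized) are approximately realized by the pieces $f_\omega\nu$ with $|s_\omega|\approx b^{-N}$. Restricting to words of length $k\approx N\log b/(-\sum_j p_j\log|s_j|)$, the quantity $-\log|s_\omega|$ concentrates near $N\log b$ by the law of large numbers, and varying the number of $i$-letters inside $\omega$ changes its value by integer multiples of $\log|s_i|$. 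Since $\log|s_i|/\log b\notin\mathbb{Q}$, Weyl's equidistribution theorem forces the fractional parts $\{-\log|s_\omega|/\log b\}$ to equidistribute uniformly in $[0,1)$ across the relevant words, so the scales $|s_\omega|$ fill the multiplicative interval $[b^{-N-1},b^{-N}]$ log-uniformly.

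The next step would be to run a Host-type criterion (in the spirit of \cite{Hochman2020Host}) that converts this log-uniformity of the scenery scales into pointwise $b$-normality. Such a statement asserts that if the $b$-adic scaling sceneries of a non-atomic measure $\mu$ equidistribute at $\mu$-a.e.\ point in a way incommensurable with the $b$-adic grid, then $\mu$-a.e.\ $x$ is $b$-normal. The point is that the scenery limit, being a log-uniform average of affinely rescaled copies of $\nu$, has vanishing $q$-th Fourier coefficient on the geometric progression $\{qb^n\}$ for every fixed $q$, and this is what cancels the Weyl sums; non-atomicity of $\nu$ (via Wiener's lemma $\frac{1}{R}\int_0^R|\widehat{\nu}|^2\to 0$) is what guarantees the vanishing of those averaged Fourier coefficients.

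The hard part will be that the IFS $\Phi$ is allowed to have arbitrary overlaps, so the symbolic coding $\{1,\ldots,n\}^{\mathbb{N}}\to\mathrm{supp}(\nu)$ is not injective and the cascade $\nu=\sum_\omega p_\omega f_\omega\nu$ is not a genuine disintegration. Classical $L^2$ estimates in the Davenport--Erdős--LeVeque style are therefore not directly available, and the scenery description must be extracted intrinsically from $\nu$ rather than from its Bernoulli model. I would handle this via the CP-chain machinery of \cite{hochmanshmerkin2015,Hochman2020Host}, as appears to be done in \cite{Algom2022Baker}; an alternative route is the more direct Fourier-analytic decomposition of \cite{barany2023scaling}, which uses the same irrationality to rule out concentration of $|\widehat{\nu}|$ on the progression $\{qb^n\}_{n\ge 0}$.
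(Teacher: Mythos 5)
Your plan follows essentially the same route as the proofs the paper relies on for this statement (the paper itself gives no proof of Theorem \ref{Main technical Theorem 2}, deferring to \cite{Algom2022Baker} and \cite{barany2023scaling}): iterate the IFS, use the non-lattice condition $\log|s_i|/\log b\notin\mathbb{Q}$ to equidistribute the scenery scales, and feed this into a Host--Hochman--Shmerkin type criterion, with overlaps handled either by the disintegration of \cite{Algom2022Baker} or by the uniform-scaling result of \cite{barany2023scaling}. Just be aware that essentially all of the analytic content sits inside the two steps you cite as black boxes --- the Host-type criterion (whose engine, in this paper's framework, is the martingale argument of Theorem \ref{Theorem Martingale} relating the orbit $\lbrace T_b^n x\rbrace$ to the rescaled pieces $f_\omega\nu$) and the overlap reduction --- so what you have is an accurate outline of the known proof rather than a self-contained argument.
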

Theorem \ref{Main technical Theorem 2} has a long history: The first result in this direction was given by Cassels \cite{Cassels1960normal} and Schmidt \cite{Schmidt1960normal} independently around 1960, who showed that if $\mu$ is the Cantor-Lebesgue measure on the middle-$\frac{1}{3}$ Cantor set, then $\mu$ is pointwise $b$-normal for all $b \not\sim 3$. This was later generalized by Feldman and Smorodinsky \cite{Feldman1992normal} to all non-degenerate Cantor-Lebesgue measures with respect to any base $b$. An influential paper  of Host \cite{host1995normal} gave corresponding results about $T_p$-invariant measures, and related this line of research to  Furstenberg's $\times 2, \times 3$ Conjecture (see  also Meiri \cite{Meiri1998host}, and Lindenstrauss \cite{Elon2001host}).

In 2015 Hochman and Shmerkin \cite[Theorem 1.4]{hochmanshmerkin2015} proved Theorem \ref{Main technical Theorem 2} under the additional assumptions that $f_i' >0$ for all $i$, and that the intervals $f_i(J), 1\leq i \leq n,$ are disjoint except for potentially at their endpoints (a condition that is formally stronger than the well known open set condition). It actually followed from a more general result, \cite[Theorem 1.2]{hochmanshmerkin2015}, which provides a general criterion for a uniformly scaling measure to be pointwise $b$-normal. Informally, a uniformly scaling measure on $\mathbb{R}$ is a measure such  that for $\mu$-a.e. $x$, the family of measures one sees as one ``zooms'' into $x$, all the while re-scaling and conditioning the measure, equidistributes towards some distribution $P$ (supported on Borel probability measures). See e.g. \cite{Algom2022Baker} for more details. They also provided a general method of relating the distribution of $T_p$-orbits to the original measure - an idea that plays a key role in the proofs  of both Theorem \ref{Main technical Theorem 2} and Theorem \ref{main tech theorem 1}, see Section \ref{sec:The martingale arguement} below for more details. Now, up until very recently,  only self-similar measures with separation (say, with the open set condition, similarly to the condition stated above)   were known to be uniformly scaling. The main innovation introduced in \cite[Theorem 1.1]{Algom2022Baker} was a disintegration method that can be used to reduce to such a situation. However, a recent breakthrough of   B{\'a}r{\'a}ny,  K{\"a}enm{\"a}ki,  Py{\"o}r{\"a}l{\"a}, and  Wu \cite{barany2023scaling} shows that in fact all self-similar measures are uniformly scaling (also in higher dimensions, and for some smooth IFSs) - which thus also implies Theorem \ref{Main technical Theorem 2}, in a more direct way (see also \cite{Aleksi2021flow}).

Let us now explain how Theorems \ref{main tech theorem 1} and \ref{Main technical Theorem 2} can be used to deduce Theorem \ref{Theorem main}:
$$ $$
\noindent{ \textbf{Proof of Theorem \ref{Theorem main}}} Let $\nu \in \mathcal{P}(\mathbb{R})$ be a self-similar  measure with respect to an IFS $\Phi$, and assume $\nu$ is not pointwise absolutely normal. By Theorem \ref{main tech theorem 1}  $\nu$ cannot be a Rajchman measure. Thus, applying the recent works of Li-Sahlsten \cite{li2019trigonometric} and Br\'emont \cite{bremont2019rajchman} (see also Varj\'u-Yu \cite{varju2020fourier}) that provide a complete classification of which self-similar measures have the Rajchman property, we can deduce that $\Phi$ must have the following structure:

There exists an invertible affine map $g:\mathbb{R}\rightarrow \mathbb{R}$ such that the   $g$-conjugated  IFS $\Psi = \lbrace g^{-1} \circ f_i \circ g\rbrace_{f_i \in \Phi}$ satisfies:
\begin{enumerate}
\item There exists a Pisot number $\beta>1$ such that for every $g_i \in \Psi$, $\frac{\log |g_i'|}{\log \beta} \in \mathbb{Q}.$

Recall that $\beta>1$ is called a \textit{Pisot} number if it is an algebraic integer whose algebraic conjugates are all of modulus strictly less than $1$. Note that every integer larger than $1$ is a Pisot number. 

\item For every $g_i \in \Psi$ the translate $g_i(0) \in \mathbb{Q}(\beta)$, and in fact, has a more specialised algebraic structure (see \cite[Definitions 2.1 and 2.2]{bremont2019rajchman}).
\end{enumerate}
Finally, if for every integer $b>1$ we would have $b \not \sim \beta$, then by Theorem \ref{Main technical Theorem 2}  $\nu$ would be pointwise absolutely normal, which contradicts our assumptions. We can thus conclude that  $\beta \sim b$ for some integer $b>1$. In this case, opening up \cite[Definitions 2.1 and 2.2]{bremont2019rajchman} we see that in fact for every $g_i \in \Psi$ there exist $q_i \in \mathbb{Q}_+, k_i \in \mathbb{Z}$ such that $g_i(0) = \frac{k_i}{b^{q_i}}$. This completes the proof of Theorem \ref{Theorem main}. \hfill{$\Box$}
$$ $$
We remark that Dayan,  Ganguly,  and Weiss \cite{dayan2020random} recently gave sufficient conditions in terms of the translates of the IFS, that ensure pointwise normality in a given base. Using this result, it is possible to at least sometimes say something more about the conjugating map $g$, or to see that $q_i \in \mathbb{N}$. For example, it may be used to show that there are instances when $g' \in \mathbb{Q}$  - see e.g. \cite[Section 7.3]{algom2020decay}.

The rest of this paper is organized as follows: In Section \ref{sec:The martingale arguement} we provide an exposition to an important method developed by Hochman-Shmerkin \cite{hochmanshmerkin2015} and  Hochman \cite{Hochman2020Host}, to study equidistribution problems. This is Theorem \ref{Theorem Martingale}, whose proof involves the only non-elementary tool used to prove Theorem \ref{main tech theorem 1}  -  the ergodic Theorem for Martingale differences. Next, we proceed to prove Theorem \ref{main tech theorem 1} in Section \ref{Section proof}. One of the main innovations here is in the application of Theorem \ref{Theorem Martingale}, in such a way that does not require separation from the IFS.

We end the paper with a list of some remaining open problems about effective equidistribution, the case of non-integer bases, and higher order correlations. These are discussed  in detail in Section \ref{Section problems}.

\subsection{Acknowledgements}
The author thanks Meng Wu for his comments, in particular regarding Problem \ref{Question rajchman} and the non-Pisot case. We also thank Barak Weiss and Yann Bugeaud  for their remarks. The author was supported by Grant No. 2022034 from the United States - Israel Binational Science Foundation  (BSF), Jerusalem, Israel.

\section{Relating the distribution of orbits to the measure}
\label{sec:The martingale arguement}
In this Section we discuss a key idea towards Theorem \ref{main tech theorem 1}. Let $X$ be a compact metric space, and let $\mu \in \mathcal{P}(X)$ be a Borel probability measure. Let $T:X\rightarrow X$ be a measurable transformation. Let $\mathcal{C}_n$ denote a sequence (Borel) measurable partitions.  The following Theorem proves the following  useful fact: In order to understand  the statistics of the orbit of a $\mu$-typical point $x$,  one can instead consider the statistics of averages of the measures
$$T^n \mu_{\mathcal{C}_n(x)},$$
where $\mathcal{C}_n(x)$ is the unique $\mathcal{C}$-cell that contains $x$, and $\mu_{\mathcal{C}_n(x)}$ is the corresponding $\mu$-conditional measure (normalized restriction of $\mu$ to $\mathcal{C}_n(x)$, assuming the latter has positive $\mu$-mass). Of course, one needs to impose an assumption that will allow the $T$-dynamics to communicate with these partitions.

The first version of the following Theorem was given by   Hochman and Shmerkin \cite{hochmanshmerkin2015}; The version given here is due to Hochman \cite{Hochman2020Host}: 
\begin{theorem} \label{Theorem Martingale}
Let $X$ be a compact metric space and let $T\in \mathcal{C}(X)$. Let $\mathcal{A}_1,\mathcal{A}_2,..$ be a refining sequence of Borel partitions. Let $\mu \in\mathcal{P}(X)$. Suppose that
\begin{equation} \label{Eq 2.2} 
\lim_{k\rightarrow \infty} \sup_{n\in \mathbb{N}}\left \lbrace \text{diam}(T^n A):\, A\in \mathcal{A}_{n+k},\, \mu(A)>0 \right\rbrace = 0.
\end{equation}
Then for $\mu$-a.e. $x$, in the weak$^*$ sense we have
$$\lim_{N \rightarrow \infty} \left( \frac{1}{N} \sum_{n=1} ^N \delta_{T ^n (x)} \right) - \left( \frac{1}{N} \sum_{n=1} ^N T ^n \mu_{\mathcal{A}_n(x)} \right)=0.$$
\end{theorem}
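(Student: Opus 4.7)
The plan is to reduce weak-$*$ convergence to scalar testing against a countable dense set of continuous functions, and then to expose a hidden martingale-difference structure using the refining partitions. Since $X$ is compact metric, $C(X)$ is separable; by a standard diagonal/Fubini argument and the fact that weak-$*$ convergence on $\mathcal{P}(X)$ is determined by a countable dense family, it suffices to prove that for each fixed $f \in C(X)$, for $\mu$-a.e.\ $x$,
$$
\frac{1}{N}\sum_{n=1}^{N} D_n(x) \longrightarrow 0, \qquad D_n(x) := f(T^n x) - \int f \, d(T^n \mu_{\mathcal{A}_n(x)}).
$$
Setting $\mathcal{F}_n = \sigma(\mathcal{A}_n)$, which is an increasing filtration because the partitions are refining, one identifies $\int f\, d(T^n \mu_{\mathcal{A}_n(x)}) = \mathbb{E}_\mu[f\circ T^n\mid \mathcal{F}_n](x)$, so $D_n(x) = f(T^n x) - \mathbb{E}_\mu[f\circ T^n \mid \mathcal{F}_n](x)$.

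Next I exploit hypothesis \eqref{Eq 2.2}. Given $\varepsilon>0$, use the uniform continuity of $f$ on the compact space $X$ together with \eqref{Eq 2.2} to choose $k = k(\varepsilon)$ so that, uniformly in $n$, the oscillation of $f\circ T^n$ on any $\mathcal{A}_{n+k}$-cell of positive $\mu$-mass is at most $\varepsilon$. Then
$$
\bigl| f(T^n x) - \mathbb{E}_\mu[f\circ T^n \mid \mathcal{F}_{n+k}](x) \bigr| \le \varepsilon \quad\text{for all } n \text{ and } \mu\text{-a.e. } x,
$$
so if $\widetilde D_n^{(k)}(x) := \mathbb{E}_\mu[f\circ T^n\mid \mathcal{F}_{n+k}](x) - \mathbb{E}_\mu[f\circ T^n\mid \mathcal{F}_n](x)$, then $|D_n - \widetilde D_n^{(k)}| \le \varepsilon$ almost surely. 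Now telescope:
$$
\widetilde D_n^{(k)}(x) = \sum_{j=0}^{k-1} Y_n^{(j)}(x), \qquad Y_n^{(j)} := \mathbb{E}_\mu[f\circ T^n\mid \mathcal{F}_{n+j+1}] - \mathbb{E}_\mu[f\circ T^n\mid \mathcal{F}_{n+j}].
$$
For each fixed $j$, consider the shifted filtration $\mathcal{G}_n^{(j)} := \mathcal{F}_{n+j+1}$. Then $Y_n^{(j)}$ is $\mathcal{G}_n^{(j)}$-measurable and $\mathbb{E}_\mu[Y_n^{(j)} \mid \mathcal{G}_{n-1}^{(j)}] = 0$, so $(Y_n^{(j)})_{n\ge 1}$ is a uniformly bounded martingale-difference sequence (with $\|Y_n^{(j)}\|_\infty \le 2\|f\|_\infty$).

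Applying the strong law for bounded martingale differences (a Chebyshev/Borel--Cantelli argument along $N = m^2$, together with interpolation, suffices) yields $\frac{1}{N}\sum_{n=1}^{N} Y_n^{(j)} \to 0$ $\mu$-a.s.\ for each of the finitely many $j \in \{0,\dots,k-1\}$, hence $\frac{1}{N}\sum_{n=1}^{N} \widetilde D_n^{(k)} \to 0$ $\mu$-a.s. Combined with the uniform $\varepsilon$-approximation this gives $\limsup_N \bigl|\frac{1}{N}\sum_{n=1}^{N} D_n\bigr| \le \varepsilon$ $\mu$-a.s.; letting $\varepsilon$ run through a countable sequence and intersecting the countably many null sets (over $\varepsilon_m$ and over the dense family of test functions) produces the desired $\mu$-almost sure weak-$*$ statement. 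The main technical obstacle is recognising the correct martingale structure in the telescoped expression: naively $D_n$ is neither $\mathcal{F}_n$-measurable nor a martingale difference, and it is only after introducing the shifted filtration $\mathcal{G}_n^{(j)}$ that the SLLN becomes applicable. The uniformity in $n$ in \eqref{Eq 2.2} is what makes this work, since it lets one choose a single $k$ controlling the approximation error for all $n$ simultaneously.
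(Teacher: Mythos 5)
Your proof is correct and follows essentially the same route as the paper: reduce to a countable dense family of test functions, use \eqref{Eq 2.2} to replace $f\circ T^n$ by $\mathbb{E}_\mu[f\circ T^n\mid\mathcal{A}_{n+k}]$ up to a uniform $\varepsilon$, identify $\int f\,dT^n\mu_{\mathcal{A}_n(x)}$ with $\mathbb{E}_\mu[f\circ T^n\mid\mathcal{A}_n](x)$, and finish with the ergodic theorem for martingale differences. The only difference is that where the paper cites the $k$-shifted version of that theorem (Theorem \ref{Theorem erogidc diff}) as a black box, you supply its proof explicitly via the telescoping into $k$ ordinary martingale-difference sequences along shifted filtrations, which is exactly the ``short argument'' the paper refers to.
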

The proof of Theorem \ref{Theorem Martingale} relies  on the ergodic Theorem for Martingale differences:
\begin{theorem} \label{Theorem erogidc diff}
Let $(X,\mathcal{B},\mu)$ be a probability space, and let
$$\mathcal{B}_1 \subseteq \mathcal{B}_2 \subseteq \mathcal{B}_3... \subseteq \mathcal{B}$$
be an increasing sequence of $\sigma$-algebras. Let $k\in \mathbb{N}$ and let $f_n \in L_\infty (\mu, \mathcal{B}_{n+k})$ be a uniformly bounded sequence.

Then, with probability one
$$\lim_{N\rightarrow \infty} \frac{1}{N} \sum_{n=1} ^N \left( f_n - \mathbb{E}(f_n | \mathcal{B}_n ) \right) =0.$$
\end{theorem}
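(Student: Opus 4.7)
The plan is to set $d_n := f_n - \mathbb{E}(f_n \mid \mathcal{B}_n)$ and reduce to a genuine bounded martingale difference sequence by splitting $\mathbb{N}$ into $k+1$ arithmetic progressions with common difference $k+1$. Observe first that each $d_n$ is $\mathcal{B}_{n+k}$-measurable, satisfies $\mathbb{E}(d_n \mid \mathcal{B}_n) = 0$, and is uniformly bounded (by twice the common bound on the $f_n$). The obstruction to treating $(d_n)$ itself as a martingale difference sequence is that $d_n$ need not be $\mathcal{B}_n$-measurable — only $\mathcal{B}_{n+k}$-measurable — so the filtration lags by $k$ steps. Introducing the spacing $k+1$ is exactly enough to fix this.

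Concretely, fix $r \in \{0,1,\dots,k\}$, set $n_j := r + j(k+1)$ for $j \ge 0$, and define $\mathcal{G}_j^{(r)} := \mathcal{B}_{n_{j+1}}$. Since $n_j + k < n_{j+1}$, the random variable $d_{n_j}$ is $\mathcal{G}_j^{(r)}$-measurable, and by the tower property
$$\mathbb{E}\bigl(d_{n_j} \mid \mathcal{G}_{j-1}^{(r)}\bigr) = \mathbb{E}\bigl(d_{n_j} \mid \mathcal{B}_{n_j}\bigr) = 0,$$
so $(d_{n_j})_{j \ge 0}$ is a bounded martingale difference sequence with respect to the filtration $(\mathcal{G}_j^{(r)})$. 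Now I would invoke the standard $L^2$ strong law of large numbers for bounded martingale differences: by orthogonality in $L^2$, $\mathbb{E}\bigl((\sum_{j=1}^J d_{n_j})^2\bigr) = \sum_{j=1}^J \mathbb{E}(d_{n_j}^2) = O(J)$, and combining with Doob's maximal inequality along the dyadic subsequence $J = 2^m$ and a Borel–Cantelli argument (or Kronecker's lemma applied to $\sum \mathbb{E}(d_{n_j}^2)/j^2 < \infty$) yields
$$\frac{1}{J} \sum_{j=1}^J d_{n_j} \longrightarrow 0 \quad \text{almost surely}.$$

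Since there are only $k+1$ residue classes, summing over $r = 0,\dots,k$ and absorbing the $O(1)$ boundary terms using the uniform bound on $d_n$ gives $\frac{1}{N}\sum_{n=1}^N d_n \to 0$ a.s., which is the desired conclusion. The main obstacle in this plan is purely bookkeeping: verifying that the spacing $k+1$ is sharp enough so that $d_{n_j}$ is genuinely $\mathcal{G}_{j}^{(r)}$-measurable while $\mathcal{G}_{j-1}^{(r)} \subseteq \mathcal{B}_{n_j}$, and confirming that all boundary and partial-progression errors are $o(N)$. The martingale SLLN itself is standard and requires no new input beyond the uniform boundedness hypothesis.
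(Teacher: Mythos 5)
Your proof is correct and follows essentially the route the paper takes: the paper cites the standard $k=1$ martingale-difference SLLN (Feller) together with Hochman's short reduction for general $k$, and that reduction is precisely your splitting into arithmetic progressions so that each subsequence $(d_{n_j})$ becomes a genuine bounded martingale difference sequence for the sped-up filtration. The only cosmetic remark is that spacing $k$ already suffices (since $d_{n_j}$ need only be $\mathcal{B}_{n_{j+1}}$-measurable and $n_j+k=n_{j+1}$ works), but your choice of $k+1$ is equally valid.
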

The case when $k=1$ is the standard version \cite[Chapter 7, Theorem 3]{feller1991introduction}. The case of general $k\in \mathbb{N}$ follows from it by a short argument \cite[Section 2.1]{Hochman2020Host}.

$$ $$
\noindent{\textbf{Proof of Theorem \ref{Theorem Martingale}}} Let $f\in \mathcal{F} \subseteq C(X)$, where $\mathcal{F}$ is a dense and countable set. It suffices to show that for $\mu$-a.e. $x$
\begin{equation} \label{Eq (2.3)}
\lim_{N \rightarrow \infty} \left( \frac{1}{N} \sum_{n=1} ^N f(T ^n (x))  -  \frac{1}{N} \sum_{n=1} ^N \int f\,d T ^n \mu_{\mathcal{A}_n(x)} \right)=0.
\end{equation}
By \eqref{Eq 2.2},
$$\lim_{k\rightarrow \infty} f\circ T^n(x)- \mathbb{E} \left( f\circ T^n | \mathcal{A}_{n+k} \right)=0.$$
Moreover, this limit is uniform in $n\in \mathbb{N}$ and $x\in \text{supp}(\mu)$. So, it is enough to establish \eqref{Eq (2.3)} with $\mathbb{E} \left( f\circ T^n | \mathcal{A}_{n+k} \right)$ instead of $f\circ T^n$. As for the other term, for all $n\in \mathbb{N}$ we have
$$ \int f\,d T ^n \mu_{\mathcal{A}_n(x)} =  \int f\circ  T ^n \,d  \mu_{\mathcal{A}_n(x)} = \mathbb{E}_\mu \left( f\circ T^n | \mathcal{A}_n \right)(x).$$
Having made these adjustments, \eqref{Eq (2.3)} is now a direct consequence of Theorem \ref{Theorem erogidc diff} applied to the functions
$$f_n =    \mathbb{E}_\mu \left( f\circ T^n | \mathcal{A}_n \right).$$
The Theorem is proved. \hfill{$\Box$}

%\begin{remark}
%Theorem \ref{Theorem Martingale} allows to work with magnified pieces of the measure, in place of the usual orbit. This is useful since for well structured measure, some regularity survives in these small pieces. In the proof of Theorem \ref{Main technical Theorem 2}, this will play a key role. However, note that the use of the Martingale difference Theorem renders it non-effective. See more about this in Problem XXX.
%\end{remark}

\section{Proof of Theorem \ref{main tech theorem 1}} \label{Section proof}
In this Section we prove Theorem \ref{main tech theorem 1}. Let us first fix some definitions and notation that will accompany us throughout the proof.  Let $\nu=\nu_\mathbf{p}$ be a self-similar measure with respect to a probability vector $\mathbf{p}$ and an IFS $\Phi = \lbrace f_1,...,f_n\rbrace$, where $f_i(x)=s_i\cdot x+t_i$ are invertible affine real contractions. Without the loss of generality we may assume that $\Phi$  preserves the interval $J=[0,1]$; in particular, $\nu\in \mathcal{P}([0,1])$.

 There is a natural coding that comes with a self-similar measure such as $\nu$: Let $\mathcal{A}:=\lbrace 1,...,n\rbrace$. For every $\omega \in \mathcal{A}^\mathbb{N}$ and $m\in \mathbb{N}$ define
$$f_{\omega|_m}:=f_{\omega_1}\circ \dots \circ f_{\omega_n}.$$
Fix $x_0 \in [0,1]$. Let $\Pi:\mathcal{A}^\mathbb{N} \rightarrow K$ be the  coding map
$$\Pi(\omega)=x_\omega:=\lim_{n\rightarrow \infty} f_{\omega|_m}(x_0).$$
Let $\rho = \max\lbrace |s_1|,...,|s_n|\rbrace$ and define a metric on $\mathcal{A}^\mathbb{N}$ via
$$d(\omega,\omega')=\rho^{ \min\lbrace n: \omega_n\neq \omega_n'\rbrace}.$$
We put the Bernoulli measure $\mathbb{P}=\mathbf{p}^\mathbb{N}$ on $\mathcal{A}^\mathbb{N}$. Then clearly we have
$$\nu = \Pi \mathbb{P}.$$

Next, let $\pi:\mathbb{R}\rightarrow \mathbb{R}/\mathbb{Z}\simeq [0,1)$ be the map
$$\pi(x)=x\mod 1.$$
Let $p\in \mathbb{N}$ be a positive integer, $p>1$.  Recall that $T_p:\mathbb{R} \rightarrow [0,1)$ is the map
$$T_p(x)=p\cdot x \mod 1.$$
Similarly, we define the map $\hat{T}_p :\mathbb{T} \rightarrow \mathbb{T}$ as
$$\hat{T}_p (x)=p\cdot x \mod 1.$$
We then have the following relation between $T_p$ and $\hat{T}_p$:
\begin{equation} \label{Eq (47)}
\hat{T}_p ^n \circ \pi(x)= \pi \circ T_p ^n (x) = p^n \cdot x \mod 1,\, \text{ for all } x\in \mathbb{R},\, n\in \mathbb{N}.
\end{equation}
Note that in \eqref{Eq (47)}  we use that $p$ is an integer. 
\subsection{A criterion for pointwise normality of self-similar measures}
We retain the notations set up at the beginning of this Section. In particular, 
recall that $\nu = \Pi \mathbb{P}$, and that $\mathcal{F}_q (\mu)$ is the Fourier mode of the measure $\mu$ at frequency $q$. We are now ready to preform our first reduction - a criterion for pointwise $p$-normality of $\nu$:
\begin{lemma} \label{Theorem 5.2}
Let  $\nu\in \mathcal{P}(\mathbb{R})$ be a  self-similar measure, and let $p>1$ be an integer. Suppose that:\newline
For every $\epsilon>0$ there exists $Q=Q(\epsilon)\in \mathbb{N}$ such that for every integer $q$ with $|q|>Q$ and for $\mathbb{P}$-a.e. $\omega$,
$$\limsup_{N\rightarrow \infty} \left| \mathcal{F}_q \left( \frac{1}{N} \sum_{n=1} ^N \delta_{T_p ^n (x_\omega)} \right) \right| <\epsilon.$$
Then $\nu$-a.e. $x$ is normal to base $p$.
\end{lemma}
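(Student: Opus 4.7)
\textbf{Proof proposal for Lemma \ref{Theorem 5.2}.} The plan is to reduce to Weyl's equidistribution criterion, to exploit the integrality of $p$ via a shift identity that converts the Fourier mode at a fixed nonzero integer $q$ into one at an arbitrarily large frequency $qp^M$, and then to invoke the hypothesis combined with a countable intersection of null sets. Recall $\mu_N^{\omega} := \tfrac{1}{N}\sum_{n=1}^N \delta_{T_p^n(x_\omega)}$. Since $\nu = \Pi \mathbb{P}$, to show $\nu$-a.e.\ $x$ is $p$-normal it suffices, by Weyl's criterion, to prove that for every nonzero integer $q$ and $\mathbb{P}$-a.e.\ $\omega$,
$$\lim_{N\to\infty}\bigl|\mathcal{F}_q(\mu_N^\omega)\bigr| \;=\; 0.$$

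The key observation is the following shift identity, which uses both that $q\in\mathbb Z$ (so $e^{2\pi i q T_p^n(x)}=e^{2\pi i q p^n x}$, i.e.\ the mod $1$ is invisible) and that $p\in\mathbb Z$ (so $qp^M\in\mathbb Z$ too, keeping us in the regime covered by the hypothesis). For any nonzero integer $q$ and any $M\in \mathbb{N}$,
$$\mathcal{F}_{qp^M}(\mu_N^\omega) \;=\; \frac{1}{N}\sum_{n=1}^N e^{2\pi i q p^{n+M} x_\omega} \;=\; \frac{1}{N}\sum_{m=M+1}^{N+M} e^{2\pi i q p^{m} x_\omega},$$
whereas $\mathcal{F}_q(\mu_N^\omega) = \frac{1}{N}\sum_{n=1}^N e^{2\pi i q p^n x_\omega}$. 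Subtracting and estimating the boundary terms gives $\bigl|\mathcal{F}_q(\mu_N^\omega) - \mathcal{F}_{qp^M}(\mu_N^\omega)\bigr| \leq 2M/N$, so for each \emph{fixed} $M$,
$$\limsup_{N\to\infty}\bigl|\mathcal{F}_q(\mu_N^\omega)\bigr| \;=\; \limsup_{N\to\infty}\bigl|\mathcal{F}_{qp^M}(\mu_N^\omega)\bigr|.$$

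Now the conclusion is a standard countable intersection. For each integer $k\geq 1$, let $Q_k := Q(1/k)$ as furnished by the hypothesis, and let
$$\Omega_k \;:=\; \Bigl\{\omega : \text{for every integer } q' \text{ with } |q'|>Q_k,\; \limsup_{N\to\infty}\bigl|\mathcal{F}_{q'}(\mu_N^\omega)\bigr| < 1/k\Bigr\}.$$
Since the ``bad'' $\omega$-set for a single $q'$ is $\mathbb{P}$-null and there are only countably many $q'\in \mathbb{Z}$, each $\Omega_k$ is $\mathbb{P}$-full, and hence so is $\Omega := \bigcap_{k\geq 1}\Omega_k$. Fix $\omega\in\Omega$ and a nonzero integer $q$. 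For each $k$, choose $M_k$ with $|q p^{M_k}|>Q_k$; applying the shift identity with $M=M_k$ yields $\limsup_N |\mathcal{F}_q(\mu_N^\omega)| < 1/k$. Letting $k\to\infty$ gives $\limsup_N |\mathcal{F}_q(\mu_N^\omega)| = 0$ for every nonzero integer $q$, so $x_\omega$ is $p$-normal by Weyl. There is no serious obstacle here; the only substantive idea is the shift identity, and the only technical point requiring care is the countable-intersection bookkeeping so that a single $\mathbb{P}$-full set works simultaneously for all $q\in \mathbb{Z}\setminus\{0\}$.
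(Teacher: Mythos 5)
Your proof is correct. The countable-intersection bookkeeping is sound (the null set in the hypothesis may depend on $q$ and $\epsilon$, but there are only countably many relevant pairs), the shift identity is valid precisely because both $q$ and $qp^M$ are integers so the mod-$1$ reduction in $T_p^n$ is invisible to the exponential, and the $2M/N$ boundary estimate correctly shows that $\limsup_N|\mathcal{F}_q(\mu_N^\omega)|=\limsup_N|\mathcal{F}_{qp^M}(\mu_N^\omega)|$ for each fixed $M$.

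Your route differs from the paper's in execution, though the core idea --- bootstrapping the frequency from $q$ to $qp^M$ so as to land in the regime $|q'|>Q(\epsilon)$ where the hypothesis applies --- is the same. The paper first passes to a weak$^*$ accumulation point $\mu$ of the empirical measures (using compactness), shows that $\pi\mu$ is $\hat{T}_p$-invariant, deduces $\mathcal{F}_q(\pi\mu)=\mathcal{F}_{qp^n}(\pi\mu)$ from that invariance, and then bounds $|\mathcal{F}_{qp^n}(\pi\mu)|$ via the Portmanteau theorem applied to the weakly convergent subsequence. You instead carry out the frequency shift directly on the finite Birkhoff sums, with an explicit error of $2M/N$, and conclude by Weyl's criterion without ever extracting a limit measure. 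What your version buys: it is more elementary (no compactness, no Portmanteau, no invariance of limit points) and it is quantitative at the level of the error term, which is in the spirit of the effectivity questions raised in Section \ref{Section problems}. What the paper's version buys: the invariance-of-the-limit formulation is the one that generalizes when the identity $T_p^n=T_{p^n}$ is unavailable or when one replaces Fourier modes by more general test functions, which is why it is the formulation used elsewhere in this line of work. Both arguments share the same minor, standard caveat: vanishing of all nonzero integer Fourier modes identifies the limiting distribution as Lebesgue on $\mathbb{T}$, which one then transports back to $[0,1)$.
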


%The point of this Lemma is that we can transfer the problem to one that concerns $\mathbb{P}$ and the projection map $\Pi(\omega)=x_\omega$. This will be useful when we treat issues that may arise from overlaps between the various pieces of the IFS.

\begin{proof}
By our assumption, the set
$$\lbrace  \omega \in \mathcal{A}^\mathbb{N}:\, \forall k>0\,\exists Q=Q(k)\, \forall q\in \mathbb{Z},|q|>Q,\, \limsup_{N\rightarrow \infty} \left| \mathcal{F}_q \left( \frac{1}{N} \sum_{n=1} ^N \delta_{T_p ^n (x_\omega)} \right) \right| <\frac{1}{k} \rbrace$$
carries full $\mathbb{P}$-measure. Let then fix an $\omega$ in this set. We will show that, in the weak$^*$ topology,
$$\lim_{N\rightarrow \infty} \frac{1}{N} \sum_{n=1} ^N \delta_{T_p ^n (x_\omega)} = \lambda,$$
where we recall that $\lambda$ is the Lebesgue measure on $[0,1]$.  By compactness, it suffices to show that every accumulation point (i.e. a measure on $[0,1)$) of the left hand side of the equation above is equal to $\lambda$. Without the loss of generality, let assume the limit above exists and equals a measure $\mu$.

To show that $\mu = \lambda$, it suffices to show that $\mathcal{F}_q(\mu)=0$ for all $q\in \mathbb{Z},q\neq 0$. By \eqref{Eq (47)} and the continuity of the map $\pi$, the measure $\pi \mu \in \mathcal{P}(\mathbb{\mathbb{T}})$ satisfies that
\begin{equation} \label{Eq port}
\pi \mu = \lim_{N\rightarrow \infty} \frac{1}{N} \sum_{n=1} ^N \delta_{\hat{T}_p ^n (\pi(x_\omega))}.
\end{equation}
The equation above implies that $\pi \mu$ is $\hat{T}_p$-invariant, i.e. 
$$\hat{T}_p  \pi \mu = \pi \mu.$$

Now, let $k\in \mathbb{N}$. Choose $n\in \mathbb{N}$ sufficiently large so that
$$\left| q\cdot p^n \right| \geq Q(k).$$
Then by the choice of $\omega$,
\begin{eqnarray*}
\left| \mathcal{F}_q (\mu) \right| &=&\left| \mathcal{F}_q ( \pi \mu) \right|   \\
&=&\left| \mathcal{F}_q ( \hat{T}_p ^n \pi \mu) \right|\\
&=&\left| \mathcal{F}_{qp^n} ( \pi \mu) \right|\\
&\leq & \limsup_{N} \left| \mathcal{F}_{qp^n} \left( \pi \left(  \frac{1}{N} \sum_{n=1} ^N \delta_{T_p ^n (x_\omega)} \right) \right) \right| \\
&= & \limsup_{N} \left| \mathcal{F}_{qp^n}  \left(  \frac{1}{N} \sum_{n=1} ^N \delta_{T_p ^n (x_\omega)} \right)  \right| \\
&<& \frac{1}{k}.
\end{eqnarray*}
Indeed, for the first equation we make use of the definition of $\pi$ and that $q\in \mathbb{Z}$, the second follows from $\hat{T}_p$-invariance, the third again uses that both $p,q\in \mathbb{Z}$, the fourth inequality follows from the Portmanteau theorem and \eqref{Eq port}, the next one uses the definition of $\pi$, and the last one our choice of $n$. Since $k$ was arbitrary, we conclude that $ \mathcal{F}_q (\mu)=0$, which implies the Lemma.
\end{proof}

\subsection{Applying Theorem \ref{Theorem Martingale}}
In this Section we apply Theorem \ref{Theorem Martingale}.

Let $p\geq 2$ be an integer. For every $n\in \mathbb{N}$ we define a stopping time $\beta_n: \mathcal{A}^\mathbb{N} \rightarrow \mathbb{N}$ via
$$\beta_n(\omega):=\min \lbrace m:\, \left| f_{\omega|_m} '(0) \right| <e^{-n\log p}\rbrace.$$
Since $\Phi$ comprises of affine maps, it is not hard to see that there exists some uniform $c=c(p)>0$ such that 
\begin{equation} \label{min stopping time}
\beta_n(\omega) \geq c\cdot n
\end{equation}
Recall the notations $\pi,\Pi$ from the previous sections.
\begin{lemma} \label{Theorem 5.5}
For $\mathbb{P}$-a.e. $\omega$, for every  $q\in \mathbb{Z}$ we have
$$\lim_{N\rightarrow \infty} \mathcal{F}_q \left( \frac{1}{N} \sum_{n=0} ^{N-1} \delta_{T_p ^n (x_\omega)} \right) - \mathcal{F}_q \left( \frac{1}{N} \sum_{n=0} ^{N-1} T_p ^n \circ f_{\omega|_{\beta_n(\omega)}} \nu \right)=0.$$ 
\end{lemma}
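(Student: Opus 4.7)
The plan is to mirror the argument of Theorem \ref{Theorem Martingale}, but to carry it out entirely on the coding space $\mathcal{A}^{\mathbb{N}}$ using the stopping-time partitions induced by $\beta_n$; this bypasses any need for a genuine partition of $[0,1]$ by $\nu$-conditionals, so no separation hypothesis on $\Phi$ is ever invoked. Because $\mathcal{F}_q$ is a continuous linear functional on $\mathcal{P}(\mathbb{T})$ and $\mathbb{Z}$ is countable, it suffices to establish the weak$^*$ statement that for $\mathbb{P}$-a.e.\ $\omega$ and every continuous $1$-periodic $f:\mathbb{R}\to\mathbb{C}$,
$$\frac{1}{N}\sum_{n=0}^{N-1} f(T_p^n x_\omega) \;-\; \frac{1}{N}\sum_{n=0}^{N-1}\int f\,d\bigl(T_p^n f_{\omega|_{\beta_n(\omega)}}\nu\bigr) \;\longrightarrow\; 0,$$
and then specialize to $f(x)=e^{2\pi i qx}$, $q\in\mathbb{Z}$.

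Put $F_n(\omega):=f(T_p^n \Pi(\omega))$, and let $\mathcal{B}_n$ be the $\sigma$-algebra on $\mathcal{A}^{\mathbb{N}}$ generated by the partition whose atom containing $\omega$ is the cylinder $[\omega|_{\beta_n(\omega)}]$. Since $\beta_n$ is non-decreasing in $n$, the family $(\mathcal{B}_n)$ is increasing. The key identification is that, by iterating the self-similarity relation $\nu=\sum_i p_i f_i\nu$ along the prefix $\omega|_{\beta_n(\omega)}$, the $\Pi$-pushforward of the normalized restriction of $\mathbb{P}$ to $[\omega|_{\beta_n(\omega)}]$ equals $f_{\omega|_{\beta_n(\omega)}}\nu$, and consequently
$$\mathbb{E}_{\mathbb{P}}(F_n\mid\mathcal{B}_n)(\omega) \;=\; \int f\,d\bigl(T_p^n f_{\omega|_{\beta_n(\omega)}}\nu\bigr).$$

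To obtain the approximate measurability needed for Theorem \ref{Theorem erogidc diff}, observe that the definition of $\beta_{n+k}$ forces $|f'_{\omega|_{\beta_{n+k}(\omega)}}|<p^{-(n+k)}$, so $\Pi([\omega|_{\beta_{n+k}(\omega)}])$ has diameter at most $Cp^{-(n+k)}$, and hence $T_p^n \Pi$ has toral diameter at most $Cp^{-k}$ on this cylinder. This yields $\|F_n-\mathbb{E}(F_n\mid\mathcal{B}_{n+k})\|_\infty \leq M_f(Cp^{-k})$ uniformly in $n$, where $M_f$ denotes the modulus of continuity of $f$ on $\mathbb{T}$. Fix $k$ and set $H_n:=\mathbb{E}(F_n\mid\mathcal{B}_{n+k})$; these are uniformly bounded and $\mathcal{B}_{n+k}$-measurable, so Theorem \ref{Theorem erogidc diff} yields that $\mathbb{P}$-a.s., $N^{-1}\sum_n (H_n - \mathbb{E}(H_n\mid\mathcal{B}_n))\to 0$. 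Combined with the tower identity $\mathbb{E}(H_n\mid\mathcal{B}_n)=\mathbb{E}(F_n\mid\mathcal{B}_n)$ and the uniform approximation bound, this gives $\limsup_N |N^{-1}\sum_n(F_n - \mathbb{E}(F_n\mid\mathcal{B}_n))| \leq M_f(Cp^{-k})$ a.s.; intersecting null sets along a countable sequence $k\to\infty$ drives the limsup to $0$, and a final countable intersection over $q\in\mathbb{Z}$ completes the argument.

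The main obstacle I expect is the identification $\mathbb{E}_{\mathbb{P}}(F_n\mid\mathcal{B}_n)=\int f\,d(T_p^n f_{\omega|_{\beta_n}}\nu)$. This is the place where the self-similarity relation must be used carefully, and it is also the step that lets the argument avoid separation: by doing the conditioning upstairs on cylinders of $\mathcal{A}^{\mathbb{N}}$ rather than on subsets of $[0,1]$, one sidesteps the difficulty that the sets $f_{\omega|_m}(K)$ may overlap and thus fail to yield a bona fide partition of the attractor.
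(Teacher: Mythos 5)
Your proof is correct. The two nontrivial ingredients --- the identification of the $\Pi$-pushforward of the normalized restriction of $\mathbb{P}$ to a stopping-time cylinder $[\omega|_{\beta_n(\omega)}]$ with $f_{\omega|_{\beta_n(\omega)}}\nu$ (which holds because $\beta_n$ is a genuine stopping time, so these cylinders partition $\mathcal{A}^{\mathbb{N}}$ and $\Pi(u\sigma)=f_u(\Pi(\sigma))$), and the diameter bound $\operatorname{diam}\bigl(T_p^n f_{\omega|_{\beta_{n+k}(\omega)}}(K)\bigr)=O(p^{-k})$ --- are exactly the two facts the paper verifies. The difference is one of packaging: the paper builds the auxiliary compact product space $X=\mathcal{A}^{\mathbb{N}}\times\mathbb{T}$, defines the partitions $\mathcal{C}_n$ along the graph of $\pi\circ\Pi$ (with a negligible junk cell $B$), and invokes Theorem \ref{Theorem Martingale} as a black box before projecting back to $\mathbb{T}$; you instead inline the proof of Theorem \ref{Theorem Martingale}, applying the martingale-difference theorem (Theorem \ref{Theorem erogidc diff}) directly to $F_n=f\circ T_p^n\circ\Pi$ with the filtration $\mathcal{B}_n$ generated by the stopping-time cylinders. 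Your version avoids the product-space bookkeeping and makes explicit where each hypothesis is used (the tower identity, the uniform modulus-of-continuity bound, the countable intersection over $k$ and over $q\in\mathbb{Z}$), at the cost of not isolating the reusable general statement that Theorem \ref{Theorem Martingale} provides. Both versions share the feature you highlight: conditioning is done upstairs on cylinders, so no separation condition on $\Phi$ is needed.
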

\begin{proof}
We first set the stage for the application of Theorem \ref{Theorem Martingale}. 
Consider the compact space $X= \mathcal{A}^\mathbb{N} \times \mathbb{T}$. We put the metric $d_\rho$ on the first coordinate, the usual metric on the second one, and the $\sup$-metric on the product space $X$. Next, for every $n$ consider the partition $\mathcal{C}_n$ of $X$ given by:
$$\mathcal{C}_n \left( \omega, \pi (x_\omega) \right) = \lbrace (\eta, \pi(x_\eta)):\, (\omega_1,...,\omega_{\beta_n(\omega)}) = (\eta_1,...,\eta_{\beta_n(\eta)} ) \rbrace,$$
$$B:=\mathcal{C}_n (\omega, x) = \lbrace (\eta,y):\, y\neq \pi(x_\eta) \rbrace \text{ for } x\neq \pi(x_\omega).$$
That is, we are grouping together elements of $X$ via the stopping time $\beta_n$ applied over the graph of the map $\pi \circ \Pi:\mathcal{A}^\mathbb{N}\rightarrow \mathbb{T}$; all elements outside of the graph are grouped together into a single cell called $B$, which will have negligible effect on the proof. Next, we define the probability measure $\mu \in \mathcal{P}(X)$ via
$$\mu(A)= \mathbb{P} \left( \lbrace \omega: \, (\omega, \pi(x_\omega) \in A \rbrace \right).$$
In particular,
$$\text{the projection of } \mu \text{ to } \mathbb{T} \text{ is } \pi\nu, \text{ and } \mu(B)=0.$$
Finally, we define $T:X\rightarrow X$ to be the continuous map
$$T(\omega,x):=\left( \omega,\, \hat{T}_p(x) \right).$$

To apply Theorem \ref{Theorem Martingale} we need to check that condition \eqref{Eq (2.3)} is met. To this end, let $n,k\in \mathbb{N}$ and fix some $A\in \mathcal{C}_{n+k}$ such that $\mu(A)>0$. Let $\omega \in \mathcal{A}^\mathbb{N}$ be such that $A=\mathcal{C}_n(\omega,x_\omega)$.  Then, by the definitions of $T$ and the metric $d_\rho$, and by \eqref{min stopping time},
$$\text{the diameter of the projection of } T^n A \text{ to the first coordinate is } \rho^{ \beta_{n+k} (\omega)} \leq \rho^{c\cdot (n+k)}.$$
In addition, recalling that $\text{supp}(\nu)=K$, we have
$$\text{the diameter of the projection of } T^n A \text{ to the first coordinate is } \text{diam}\left( T_p ^n \circ \pi \circ f_{\omega|_{\beta_{n+k}(\omega)}}(K) \right) = O(p^{-k}).$$
Taking tally of these two previous estimates and recalling the definition of the norm on $X$, we arrive at
$$ \text{diam}\left( T^n A \right) = O\left( \max \left\lbrace \rho^{c\cdot (n+k)}, p^{-k} \right\rbrace \right).$$
This converges to $0$ as $k\rightarrow \infty$ uniformly in $n$ and $\omega$.

Thus, Condition \eqref{Eq (2.3)} is met, and we may apply Theorem \ref{Theorem Martingale}: For $(\omega, \pi(x_\omega))\sim \mu$, writing $\mu_{\mathcal{C}_n(\omega,x_\omega)}$ for the conditional measure of $\mu$ on $\mathcal{C}_n(\omega,x_\omega)$, we have
$$\lim_{N\rightarrow \infty} \left( \frac{1}{N} \sum_{n=0} ^{N-1} \delta_{T ^n (\omega, x_\omega)}  -\frac{1}{N} \sum_{n=0} ^{N-1} T ^n \mu_{\mathcal{C}_n(\omega,x_\omega)} \right) =0.$$
Let us now project the equation above to $\mathbb{T}$: Since the projection of $\mu_{\mathcal{C}_n(\omega,x_\omega)}$ to $\mathbb{T}$ is precisely $\pi\circ f_{\omega|_{\beta_n(\omega)}} \nu$, we see that for $\mathbb{P}$-a.e. $\omega$
$$\lim_{N\rightarrow \infty} \left( \frac{1}{N} \sum_{n=0} ^{N-1} \delta_{\hat{T} _p ^n \circ \pi (x_\omega)}  -\frac{1}{N} \sum_{n=0} ^{N-1} \hat{T} ^n _p \circ \pi\circ f_{\omega|_{\beta_n(\omega)}} \nu \right) =0.$$

Finally, let $q\in \mathbb{Z}$, and apply the Fourier mode $\mathcal{F}_q$ to the equation above. Then, by \eqref{Eq (47)}
$$\mathcal{F}_q \left( \delta_{\hat{T} _p ^n \circ \pi (x_\omega)} \right) = \mathcal{F}_q \left( \delta_{T_p ^n (x_\omega)} \right)\, \text{ and } \mathcal{F}_q \left( \hat{T} ^n _p \circ \pi\circ f_{\omega|_{\beta_n(\omega)}} \nu \right) = \mathcal{F}_q \left( T^n _p \circ  f_{\omega|_{\beta_n(\omega)}} \nu \right).$$
The previous two displayed equations yield the Lemma. 
\end{proof}

\subsection{Conclusion of proof}
We are now in position to prove Theorem \ref{Main technical Theorem 2}. Let $p\in \mathbb{N},p\geq 2$.  By Lemma \ref{Theorem 5.2}, it suffices to show that: \newline
For every $\epsilon>0$ there exists $Q=Q(\epsilon)\in \mathbb{N}$ such that for every integer $q$ such that $|q|>Q$ and for $\mathbb{P}$-a.e. $\omega$,
$$\limsup_{N\rightarrow \infty} \left| \mathcal{F}_q \left( \frac{1}{N} \sum_{n=1} ^N \delta_{T_p ^n (x_\omega)} \right) \right| <\epsilon.$$
By Lemma \ref{Theorem 5.5}, this will follow if for every $\epsilon>0$ there exists $Q=Q(\epsilon)\in \mathbb{N}$ such that for every integer $q$ such that $|q|>Q$ and for $\mathbb{P}$-a.e. $\omega$, and for every $n\in \mathbb{N}$ we have
\begin{equation} \label{Eq need}
\left| \mathcal{F}_q \left( T_p ^n \circ f_{\omega|_{\beta_n(\omega)}} \nu \right) \right| <\epsilon.
\end{equation}
This is what we will prove.

Let us fix such an integer $q$, $\epsilon>0$, and a $\mathbb{P}$-typical $\omega$. Let $n\in \mathbb{N}$ and $x\in K$. Then since $\Phi$ is made up of affine maps (that are defined on all of $\mathbb{R}$), we can write:
\begin{equation} \label{5.5}
T^n _p \circ  f_{\omega|_{\beta_n(\omega)}} (x) = p^n \left( f_{\omega|_{\beta_n(\omega)}}' (0)\cdot x+f_{\omega|_{\beta_n(\omega)}}(0) \right) -m_{x,n},\, \text{ where } m_{x,n}\in \mathbb{Z}.
\end{equation}
Next, similarity to e.g. \eqref{min stopping time}, there is some $C_0=C_0(p)$ such that for all $n\in \mathbb{N},\omega \in \mathcal{A}^\mathbb{N}$,
$$\left| p^n \cdot  f_{\omega|_{\beta_n(\omega)}}' (0) \right|\in [C_0,1].$$

Now, write $r(\omega,n):=p^n \cdot  f_{\omega|_{\beta_n(\omega)}}' (0)$. Then, by \eqref{5.5} and since $q\in \mathbb{Z}$
$$\left| \mathcal{F}_q \left( T_p ^n \circ f_{\omega|_{\beta_n(\omega)}} \nu \right) \right| = \left| \mathcal{F}_{q\cdot r(\omega,n)} \left( \nu \right) \right|.$$
Since $\nu$ is a Rajchman measure, there exists some $Q'=Q'(\epsilon)$ such that for all $|q|>Q$,
$$\left| \mathcal{F}_{q} \left( \nu \right) \right|<\epsilon.$$
Since $r(\omega,n) \geq C_0$, if $Q=Q\cdot C_0$, then taking $|q|>Q$ we must have $|q\cdot r(\omega,n)|>Q'$, whence
$$\left| \mathcal{F}_{q\cdot r(\omega,n)} \left( \nu \right) \right|<\epsilon.$$
The previous three equations thus prove \eqref{Eq need}. The proof is complete.

\section{Some open problems} \label{Section problems}
In this Section we discuss a few open problems motivated by Theorems \ref{Theorem main},  \ref{main tech theorem 1}, and \ref{Main technical Theorem 2}.
\subsection{Effective equidistribution}
In this Section we consider the question of \textit{quantitative} (rather than qualitative) equidistribution of an orbit $\lbrace T_b ^n (x) \rbrace_{n\in \mathbb{N}}$, when $x$ is sampled from a self-similar measure:
\begin{problem} \label{Question effective}  Let $\nu$ be a self similar measure with respect to the IFS $\lbrace f_i(x)=s_i \cdot x+t_i \rbrace$. Suppose that $m$ is a positive integer such that  $m \not \sim s_i$ for some $i$. By Theorem \ref{Main technical Theorem 2}, we know that for $\nu$-a.e. $x$ we have
\begin{equation}
\frac{1}{N} \sum_{k=0} ^{N-1} f(T_m ^k x)= \int\, f(z)d\lambda(z) + o(1),\, \quad \, \forall f\in \mathcal{C}(\mathbb{T}).
\end{equation}
Can one specify a rate of convergence here? That is, can we say something about the $o(1)$ appearing above?
\end{problem}
Note that the argument given in Section \ref{Section proof} essentially  reduces the problem of  showing (qualitative) $\nu$-pointwise $p$ normality to that of showing 
$$\lim_{|q|\rightarrow \infty,\, q\in \mathbb{Z}} \mathcal{F}_q \left( T_p ^n \circ f_{\omega|_{\beta_{n}(\omega)}} \nu \right) =0, \text{ uniformly in } n, \omega.$$ 
There are several instances where  effective decay rates have been established in the limit above: Polynomial decay (of magnitude $|q|^{-\alpha},\alpha>0$) is known in some very few special cases \cite{streck2023absolute, Dai2007feng}, though it is known to hold in a strong generic sense for most self-similar measures \cite{Solomyak2021ssdecay};  Logarithmic decay (of  magnitude $1/\log \left( |q|\right)^\alpha ,\alpha>0$) has been established in a wide array of examples where the contraction ratios $\lbrace s_i \rbrace$ satisfy some rather mild   Diophantine conditions - see e.g. the very recent works \cite{Algom2021decay, li2019trigonometric, varju2020fourier} and references therein.

However,  the ergodic theorem for martingale differences (Theorem \ref{Theorem erogidc diff}) that is used to prove Theorem \ref{Theorem Martingale},  is not an effective result in general. Thus, to answer Problem \ref{Question effective}, one would need to find a different route towards equidistribution.

We remark that Problem \ref{Question effective} is also interesting when the underlying measure is a self-conformal measure, that is, when the underlying IFS is made up of $C^1$ invertible contractions of some interval. In this setting, a   version of Theorem \ref{Main technical Theorem 2} was given in \cite{barany2023scaling}, and a  version of Theorem \ref{main tech theorem 1} was given in \cite{algom2020decay}. Moreover, assuming the IFS is $C^2$, it had been shown in \cite{Baker2023sahl} and \cite{algom2023polynomial} independently that the transfer operator corresponding to the derivative cocycle and a given self-conformal measure \cite[Definition 2.3]{algom2023polynomial} has spectral gap \cite[Theorem 2.8]{algom2023polynomial}. As this property is used to prove polynomial Fourier decay for self-conformal measures, it is interesting to study if it can also be used to answer Problem \ref{Question effective} directly (without following the steps outlined in Section \ref{Section proof}).  

Finally, we remark that in the context of the $\times 2, \times 3$ conjecture, there is a related paper of  Bourgain, Lindenstrauss, Michel, and Venkatesh \cite{Bourgain2009Lind}. It is  an effective version of Rudolph's Theorem (the positive entropy case of the conjecture). Furthermore, in \cite{Bourgain2009Lind} one of the arguments  relies on Host's approach \cite{host1995normal} -  that is essentially Theorem \ref{Main technical Theorem 2} but for $T_p$-invariant measures.  

\subsection{The non-integer case}
In this section we consider the more general case of $\beta$-transformations: For $\beta>1$ we consider the map 
$$T_\beta(x)= \beta\cdot  x \mod 1,$$ 
similarly to the integer case considered previously in the paper. It is known that $T_\beta$ admits a unique absolutely continuous invariant measure, commonly referred to as the Parry measure. When $\beta$ is an integer then the Parry measure is just $\lambda$. Answering a question posed by Hochman and Shmerkin \cite{hochmanshmerkin2015} and confirming a conjecture of Bertrand-Mathis, very recently Huang and Wang \cite{huang2025coincidence}  have completely classified when two  different non-integers have the same Parry measure.

Thus, in analogy with the integer case, we say that $x$ is $\beta$-normal if it equidistributes under $T_\beta$ for the Parry measure. Now, it had been noted by Hochman-Shmerkin in \cite{hochmanshmerkin2015}  that their extension of Host's Theorem to self-similar measures  applies for pointwise normality in Pisot bases as well; In fact, Theorem \ref{Main technical Theorem 2} holds as stated if one substitutes any Pisot number $\beta>1$ instead of $b$, which was proved in this generality in \cite{Algom2022Baker} and in \cite{barany2023scaling}.

On the other hand, our method from \cite{algom2020decay}  outlined in Section \ref{Section proof} is based on Fourier analysis,  and is thus less suitable to treat $\beta$-normality for non-integer $\beta$. One key issue is that the identity $T_p ^n = T_{p^n}$, which is used  several times in our method (e.g. in \eqref{Eq (47)}) and is trivial for integers $p\geq 2$, is not true for $T_\beta$ when $\beta$ is not an integer.

We thus pose the following question:
\begin{problem} \label{Host beyond Pisot} (Pointwise $\beta$-normality beyond Pisot numbers) Let $\nu$ be a non-atomic self-similar measure with respect to the self-similar IFS $\lbrace f_i(x)=s_i \cdot x+t_i \rbrace$. Let $\alpha>1$ be any real number. Is it true that if for some $i$ we have $s_i \not \sim \alpha$ then $\nu$ is pointwise $\alpha$-normal? 
\end{problem}
We remark that one may consider this problem also for $T_b$-invariant measures, where the assumption here would be that $b \not \sim \alpha$. Recently, Fishbein \cite{Nevo2024some} had made some progress in this direction.

In addition, based on our previous discussion and Theorem \ref{Main technical Theorem 2}, the following question also arises naturally:
\begin{problem} (Rajchman self-similar measures are normal in non-integer Pisot bases) \label{Question rajchman}  Let $\nu$ be a non-atomic self-similar measure. If $\nu$ is a Rajchman measure, is it pointwise normal with respect to any Pisot number $\alpha>1$?
\end{problem}
We remark that Problem \ref{Question rajchman} can also be considered for non-Pisot numbers. However, the following example, indicated to me by Meng Wu, shows that even the precise formulation in this setting is not entirely clear. For every $\beta >2$  consider the   measure $\nu_\beta$, which is the self-similar measure corresponding to the IFS 
$$\lbrace f_1(x)=x/\beta,\,f_2(x)=(x+1)/\beta \rbrace$$ 
and the probability vector $\mathbf{p}=(\frac{1}{2}, \frac{1}{2})$. Then $\nu_\beta$ is supported on the self-similar set
$$K_\beta = \left\lbrace \sum_{n=1} ^\infty \frac{x_n}{\beta^n}:\,x_n \in \lbrace 0,1\rbrace \right \rbrace.$$
The set $K_\beta$ is clearly $T_\beta$ invariant; so, no $x\in K_\beta$ can be $\beta$-normal.

On the other hand, a suitable variant of the Erdős-Kahane argument  \cite[Section 6]{Peres200sixty} shows that for many $\beta>2$ the measure $\nu_\beta$ will have polynomial Fourier decay. This shows that Fourier decay does not, in general, force any element in the support of a measure to be $T_\beta$ normal.

 Finally, we note that by \cite{bremont2019rajchman}, if $\beta$ happens to be a Pisot number then $\nu_\beta$ is not a Rajchman measure, whence this family of measures contains no counter example towards Problem \ref{Question rajchman}. 
\subsection{Higher order correlations}
A sequence $x_n \in \mathbb{T}$  is called uniformly distributed or equidistributed if for any interval $J\subseteq \mathbb{T}$, 
\begin{equation*}
    \lim_{N\rightarrow \infty}\frac{1}{N}\sharp\left\lbrace 1\leq n\leq N:\, 
    x_n \in J\right\rbrace= \lambda(J).
\end{equation*}
Thus, $x\in \mathbb{R}$ is normal to base $b$ if $\lbrace T_b ^n (x) \rbrace_{n\in \mathbb{N}}$ is uniformly distributed. This notion  concerns the proportion  of the sequence that lies in a given interval. The fine-scale statistics of sequences in $\mathbb{T}$, which describe the behaviour of a sequence on the scale of mean gap $1/N$, have been attracting growing attention in recent years. We describe two such notions here:  $k$-level  correlations, and the distribution of level spacings (nearest-neighbour gaps), which are defined as follows.

For every  $N\in \mathbb{N}$ and $k\geq 2$ define,
$$\mathcal{U}_k=\left\{\textbf{u}=(u_1,\ldots,u_k):\, u_i\in\{1,\ldots,N\},\, u_i\neq u_j\, \text{ for all }  i\neq j\right\} \subseteq \mathbb{N}^k.$$ 
Fix $\textbf{u}\in\mathcal{U}_k$ and a sequence $x_n\in \mathbb{R}$, and consider the difference vector
$$\Delta(\textbf{u}, x_n)=(x_{u_1}-x_{u_2},\ldots,x_{u_{k-1}}-x_{u_k})\in\mathbb{R}^{k-1}.$$ 
Let $f:\mathbb{R}^{k-1} \rightarrow \mathbb{R}$ be a compactly supported function. We define the $k$-level correlation function to be
\begin{equation}\label{k level correlation}
    R_k(f, x_n, N):=\frac{1}{N}\sum_{\textbf{u}\in\mathcal{U}_k}\sum_{\textbf{l}\in\mathbb{Z}^{k-1}}f(N(\Delta(\textbf{u}, x_n)+\textbf{l})).
\end{equation} 
If the following limit exists: 
\begin{equation}\label{poisson limit}
    \lim_{N\rightarrow\infty} R_k(f, x_n, N)=\int_{\mathbb{R}^{k-1}}f(\textbf{x})  d\textbf{x},\; \forall f\in C^{\infty}_c(\mathbb{R}^{k-1})
\end{equation}
then we say that the $k$-level correlation of $x_n$ is Poissonian. This notion hints at the fact that such behaviour is consistent with the almost sure behaviour of a Poisson process with intensity one.

The distribution of level spacings (also called nearest neighbour gaps) of the  $x_n$ describe the gaps between successive  elements of $x_n$. For  $N \in \mathbb{N}$, we order the first $N$ elements of the sequence $x_n$ and re-label them as 
\begin{equation*}
   0\leq\theta_{1, N}\leq\theta_{2, N}\leq\dots\leq\theta_{N, N}\leq 1,\quad \text{ and set } \theta_{0, N}:=\theta_{N, N}-1 \pmod 1.
\end{equation*}
Let us assume that for every $s\geq 0$ the limit as $N\rightarrow \infty$ of the function
\begin{equation*}
\lim_{N\rightarrow \infty}    G(s, x_n, N):=\frac{1}{N}\sharp\left\{1\leq n\leq N:\, N\left( \theta_{n, N}-\theta_{n-1, N} \right)\leq s\right\} \text{ exists.}
\end{equation*}
The limit function $G(s)$ is then called the asymptotic distribution function of the level spacings of $x_n$. The level spacings are called Poissonian if 
$$G(s)=1-e^{-s}.$$ 
Note that this is  the distribution of the waiting time of a Poisson process with intensity one. These notions are standard ways of describing the pseudo-randomness properties of sequences - see  \cite{Yesha2023Baker} for more details.

We can now state our final Problem:
\begin{problem} (higher order correlations) \label{Question higher order corr}
Let $b>1$ be an integer, and let $\nu \in \mathcal{P}(\mathbb{R})$ be a non-atomic self-similar measure with respect to an IFS $\Phi$. Suppose that there exists $f_i \in \Phi$ such that $f_i ' \not \sim b$.
\begin{enumerate}
\item  Let $k \geq 2$. Are the $k$-level correlation of $T_b ^n x$  Poissonian for $\nu$-a.e. $x$?

\item Are the level spacings Poissonian?
\end{enumerate}

\end{problem}
There are some recent results in the literature that study related problems. However, this is mostly for orbits of typical points under \textit{non-linear} maps.  A classical Theorem of Koksma states that $\lbrace x^n \mod 1 \rbrace_{n\in \mathbb{N}}$ is  equidistributed for Lebesgue a.e. $x>1$. Aistleitner  and Baker \cite{Aistleitner2021Baker} recently strengthened this results by   proving that $\lbrace x^n \mod 1 \rbrace_{n\in \mathbb{N}}$ has Poissonian pair correlations for almost every $x>1$ (Poissonian pair correlations is known to  imply uniform distribution \cite{Chris2018Lach, Larcher2017equi, Jens2020equi}).  More recently, Aistleitner, Baker, Technau, and Yesha \cite{Yesha2023Baker} proved that typically such sequences  have Poissonian $k$-level correlations, and thus obtained that the level spacings of $\lbrace x^n \mod 1 \rbrace_{n\in \mathbb{N}}$ are Poissonian for almost surely (it is well-known \cite[Appendix A]{Rudnick1999par} that if the  $k$-level correlation of a sequence is Poissonian for all $k\geq 2$, then the level spacings are also Poissonian).

The results in the previous paragraph deal with $x>1$ that is sampled from the Lebesgue measure. Recall that such measures are always self-similar, but there exist many self-similar measures that are singular. Baker \cite{bakwer2021equi} showed that for some self-similar measures, $\lbrace x^n \mod 1 \rbrace_{n\in \mathbb{N}}$ is also uniformly distributed almost surely. This was shown, for example, for  translates of the Cantor-Lebesgue measure on the middle-thirds Cantor set. Baker  conjectured  that this should be true for every self-similar measure supported on $[1,\infty)$. The Conjecture  was very recently proved by the author, Chang, Meng Wu, and Yu-Liang Wu \cite{algom2023wu}. In fact, in \cite{algom2023wu} a much stronger result,  that $\lbrace x^n \mod 1 \rbrace_{n\in \mathbb{N}}$ typically has Poissonian $k$-level correlations for all $k\geq 2$, was proved.

\bibliography{bib}
\bibliographystyle{plain}

\end{document}